\def\eq#1{Eq.~(\ref{#1})}
\def\onehalf{{\tfrac{1}{2}}}
\def\Ber{{\textrm{Ber}}}
\def\Beta{{\textrm{Beta}}}
\begin{document}

\theoremstyle{plain}
\newtheorem*{thm}{Theorem}
\newtheorem*{cor}{Corollary} 

\theoremstyle{definition} 
\newtheorem{defn}{Definition}[section] 
\newtheorem{conj}{Conjecture}[section] 
\newtheorem{exmp}{Example}[section]
\theoremstyle{remark} 
\newtheorem*{rem}{Remark} 
\newtheorem*{note}{Note} 
\newtheorem{case}{Case}

\title{The equalization probability of the P\'olya urn}
\author{Timothy C. Wallstrom} 
\address{Theoretical Division, Los Alamos National Laboratory, Los Alamos, NM 87545}
\email{tcw@lanl.gov} 
\keywords{P\'olya urn, exchangeability, first passage, beta distribution} 
\subjclass[2000]{60J80,60G09} 
\begin{abstract}
  We consider a P\'olya urn, started with $b$ black and $w$ white
  balls, where $b>w$. We compute the probability that there are ever
  the same number of black and white balls in the urn, and show that
  it is twice the probability of getting no more than $w-1$ heads in
  $b+w-1$ tosses of a fair coin.
\end{abstract}
\maketitle

An urn contains $b$ black and $w$ white balls, where $b>w$. A ball is
drawn from the urn at random, and then replaced with two balls of the
same color. This same procedure is then repeated indefinitely. What is
the probability that the urn ever contains the same number of black
and white balls?

The problem involves the famous P\'olya
urn~\cite{Eggenberger:1923kv,Polya:1930wp,johnson_urn_1977,mahmoud_polya_2008}. The
solution is not obvious, because the probability of a black or white
draw is constantly changing, and an infinite number of different draw
sequences can lead to equalization.  The purpose of this note is to
show that there is a remarkably simple solution: the equalization
probability is just twice the probability that in $b+w-1$ tosses of a
fair coin, no more than $w-1$ will be heads.

A probabilist would solve this problem by noting that the draws of the
P\'olya urn are \textit{exchangeable}, in the sense that the
probability of drawing any finite sequence of black and white balls
depends only on the total number of black balls, and not on the order
in which they are drawn. She would then invoke de Finetti's theorem,
which states that an exchangeable process is a mixture of independent
and identically distributed (i.i.d.)  processes, which in this case
means Bernoulli processes, or biased random
walks~\cite{kallenberg_foundations_2010,aldous_ecole_1985,feller_introduction_1971}. In
this way, she would reduce the equalization problem for the P\'olya
urn to the gambler's ruin problem, whose well-known solution dates to
the inception of probability
theory~\cite{feller_introduction_1968}. Such an approach is not
without its charms, and also leads to efficient solutions of more
difficult problems, such as the probability that there are ever $k$
more white than black balls. However, it depends on the gambler's ruin
results, and it seems worthwhile, if possible, to prove the result
directly.

In this note, I provide an elementary proof of the main result, which
does not depend on the gambler's ruin results.  This work was inspired
by a recent paper of Antal, Ben-Naim, and
Krapivsky~\cite{Antal:2010uh}, who posed the equalization problem
while working in the context of first-passage theory, and provided a
closed form expression for the solution. The expressions in the
current paper, however, are new.

Let $S_n=B_n-W_n$, the excess of black over white balls after $n$
draws. $S_n$ is a Markov process that starts at $S_0=b-w>0$, and at
each step either increases by one, if a black ball is drawn, or
decreases by one, if a white ball is drawn.  The probabilities of
these two events are just $B_n/N_n$ and $W_n/N_n$, respectively, where
$N_n=b+w+n$ is the number of balls in the urn after $n$ draws.  The
trajectory of $S_n$ thus resembles a random walk, except that the
probabilities of moving up and down change with every step. We are
interested in the probability that the path ever touches the $S$-axis.

We will need one key fact about the process $S_n$, or equivalently,
about $B_n$: the fraction of black balls, $B_n/N_n$, has a random
limit $Z$, which is given by the $\Beta_{b,w}$ distribution:
\begin{equation}
  \label{eq:beta}
  \Beta_{b,w}(p) = \frac{\Gamma(b+w)}{\Gamma(b)\Gamma(w)}\,p^{b-1}(1-p)^{w-1}\quad (0<p<1),
\end{equation}
An elementary proof may be obtained by following the reasoning in de
Finetti~\cite[p. 219]{finetti_theory_1975}, who shows how the P\'olya
urn process may be modeled using random draws from the uniform
distribution. For a less elementary proof,
see~\cite[\S2]{Freedman:1965tv}. It follows that $\mu_n=S_n/N_n$ also
converges, to $\mu\equiv 2Z-1$.  We write
$F^\Beta_{b,w}(p)=\int_0^p\Beta_{b,w}(p)\,dp$ for the distribution
function of the beta distribution.

\begin{thm} If a P\'olya urn is started with $b$ black and $w$ white
  balls, then the probability that the number of black and white
  balls will ever be equal is $2 F^\Beta_{b,w}(\onehalf)$.
\end{thm}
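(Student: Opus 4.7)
The plan is to introduce the first equalization time $T=\inf\{n: S_n=0\}$ (with $T=\infty$ if no such $n$ exists), let $E=\{T<\infty\}$, and compute $P(E)$ by comparing it with $P(\mu<0)$, which already equals $F^\Beta_{b,w}(\onehalf)$ since $\mu=2Z-1$ with $Z\sim\Beta_{b,w}$. The proof would use no gambler's-ruin input; the two ingredients are (i) the existence and distribution of the limit $\mu$ recalled in the excerpt, and (ii) the symmetry of the urn once equalized.

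First I would observe that $\{\mu<0\}\subseteq E$ almost surely: if $\mu<0$, then $\mu_n=S_n/N_n$ is eventually negative, so $S_n$, which starts at $b-w>0$ and moves by $\pm 1$, must hit $0$. Equivalently, on $E^c$ the excess $S_n$ stays positive forever, so $\mu\ge 0$. This gives $P(\mu<0)=P(\mu<0,\,E)$.

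The heart of the argument is the symmetry claim $P(\mu<0\mid T=n)=\onehalf$ for every $n$ with $P(T=n)>0$. At time $T=n$ the urn contains $k=(b+w+n)/2$ balls of each color, and by the strong Markov property the subsequent evolution is a fresh P\'olya urn started from $(k,k)$. Since $\mu$ is a tail functional of the draw sequence, its conditional distribution given $T=n$ is that of $2Z'-1$ with $Z'\sim\Beta_{k,k}$, and $\Beta_{k,k}$ is symmetric about $\onehalf$. Averaging over $n$ then gives $P(\mu<0,\,E)=\onehalf\,P(E)$, and combining with the previous step yields $P(E)=2P(\mu<0)=2F^\Beta_{b,w}(\onehalf)$.

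The only non-bookkeeping point is the symmetry step, where one must justify that the post-$T$ urn is a fresh P\'olya urn with equal initial composition and that the limiting fraction is indeed determined by the tail of the draw sequence. Both facts follow directly from the Markov property of $(B_n,W_n)$ together with the almost-sure convergence of $B_n/N_n$ stated in the excerpt, so no deep input is needed. The whole proof amounts to a clean decomposition of $\{\mu<0\}$ according to whether equalization ever occurs, and in particular never invokes the gambler's ruin formula.
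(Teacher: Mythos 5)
Your proposal is correct and follows essentially the same route as the paper: decompose the equalization event by the sign of the limiting mean $\mu$, note that $\mu<0$ forces a crossing, and use the symmetry of the urn at the first equalization time to get $P(\mu<0,\,E)=\onehalf P(E)$. Your version merely makes the paper's symmetry step more explicit by conditioning on $T=n$ and invoking the symmetry of $\Beta_{k,k}$ about $\onehalf$.
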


\begin{proof}
  Let $\tau$ be the random time at which the path first touches the
  boundary $S=0$.  The probability of equalization is just the
  probability of the event $\{\tau<\infty\}$, which can be divided
  into the two events $\{\tau<\infty\}\cap\{\mu>0\}$ and
  $\{\tau<\infty\}\cap\{\mu<0\}$. (The probability that $\mu=0$ is
  zero, because the density in~\eq{eq:beta} is continuous.) But these
  two events have equal probability.  Indeed, $\mu$ depends only on
  $S_\tau(n)\equiv S(\tau+n)$, because the initial segment is finite,
  and has no effect on the mean. At time $\tau$, the urn has an equal
  number of black and white balls, so the mean of its subsequent
  trajectory is equally likely to be positive or negative.
  Furthermore, if $\mu<0$, then $\tau<\infty$. Indeed, if $\mu<0$,
  then the path will eventually be below the axis, and since it
  started out above the axis, it must cross at some point. Thus,
 \begin{displaymath}
   P(\tau <\infty) = 2 P (\{\tau<\infty\}\cap\{\mu<0\}) = 
   2P(\{\mu<0\}) = 2F^\Beta_{b,w}(\onehalf).
 \end{displaymath}
 The last expression follows from the fact that $\mu<0$ if
 and only if $p<\onehalf$.
\end{proof}

The equalization probability can also be expressed as a binomial sum,
due to an interesting connection between the beta and uniform
distributions.  Let $U_1,U_2\ldots,U_n$ be $n$ independent samples
from the uniform distribution on $[0,1]$, and let
$U_{(1)}<U_{(2)}<\cdots<U_{(n)}$ be the same samples arranged in
increasing order. (The $U_{(i)}$ are called the \textit{order
  statistics} of the sample.) Then the density of $U_{(b)}$ is given
by $\Beta_{b,w}$, where $b+w=n+1$. We refer the reader
to~\cite[Eq. I.6.7]{feller_introduction_1971} for
the simple proof.

Given this result, the probability that a $\Beta_{b,w}$ variable will
be less than $x$ is just the probability that at least $b$ uniform
variates will be less than $x$, or equivalently, that no more than
$n-b$ uniform variates will be greater than $x$.  Let $\Ber_p$ denote
a Bernoulli random variable, taking the value one with probabibility
$p$, and zero otherwise. In symbols, then
\begin{equation}
  \label{eq:pears1}
  P(\Beta_{b,w}\le x) = P(\sum_{i=0}^{b+w-1} X_i \ge b) = P(\sum_{i=0}^{b+w-1} Y_i \le w-1),
\end{equation}
where the $X_i$ and $Y_i$ are independent $\Ber_x$ and $\Ber_{1-x}$
random variables, respectively. The last expression, with $x=1/2$,
establishes the following corollary:
\begin{cor} If a P\'olya urn is started with $b$ black and $w$ white
  balls, then the probability that the number of black and white balls
  will ever be equal is the same as the probability that in $b+w-1$
  tosses of a fair coin, no more than $w-1$ will be heads.
\end{cor}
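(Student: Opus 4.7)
The plan is to combine the Theorem with the order-statistics representation of the beta distribution described in the paragraph preceding the corollary, and then to specialize to $x=\onehalf$. The Theorem gives the equalization probability as $2F^\Beta_{b,w}(\onehalf)$, so the remaining work is to express $F^\Beta_{b,w}(\onehalf)$ as a fair-coin binomial probability; \eq{eq:pears1} does exactly this for general $x$, and the case $x=\onehalf$ converts the underlying Bernoulli variables into fair coin tosses.

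First I would invoke the standard fact (attributed to Feller in the text) that if $U_1,\ldots,U_n$ are i.i.d.\ uniform on $[0,1]$ with $n=b+w-1$, then the $b$-th order statistic $U_{(b)}$ has density $\Beta_{b,w}$, giving $F^\Beta_{b,w}(x)=P(U_{(b)}\le x)$ for every $x\in[0,1]$. Next I would translate the event $\{U_{(b)}\le x\}$ into Bernoulli terms: it holds precisely when at least $b$ of the $U_i$ fall in $[0,x]$, so setting $X_i=\mathbf{1}\{U_i\le x\}\sim\Ber_x$ and using independence gives $P(U_{(b)}\le x)=P(\sum_i X_i\ge b)$. Writing $Y_i=1-X_i\sim\Ber_{1-x}$ and observing that $\sum X_i\ge b$ is equivalent to $\sum Y_i\le n-b=w-1$ then produces the two equivalent binomial forms in \eq{eq:pears1}.

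Finally I would set $x=\onehalf$, at which point both $X_i$ and $Y_i$ become $\Ber_{1/2}$ variables, i.e.\ independent fair coin tosses, so $\sum_{i=1}^{b+w-1} Y_i$ counts the heads in $b+w-1$ fair tosses; combining this with the Theorem yields the corollary. No step is a real obstacle: the order-statistics identity is the only nontrivial input, and its proof is already referenced in the text. The small things to watch are the off-by-one bookkeeping $n=b+w-1$ and the complementary rewrite $\{\sum X_i\ge b\}=\{\sum Y_i\le w-1\}$, which is pure counting and does not use any symmetry of the fair coin beyond the fact that $X_i$ and $Y_i$ have the same distribution at $x=\onehalf$.
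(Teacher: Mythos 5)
Your proposal is correct and follows the paper's route exactly: the order-statistics identity for $U_{(b)}$ with $n=b+w-1$, the translation into Bernoulli counts yielding \eq{eq:pears1}, and the specialization $x=\onehalf$ combined with the Theorem. One point to make explicit in your final step: your (correct) computation gives the equalization probability as \emph{twice} $P(\sum_i Y_i\le w-1)$, in agreement with the abstract and introduction, whereas the corollary's literal wording (``is the same as'') has evidently dropped that factor of two, so state the conclusion with the factor of two included.
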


Eq.~(\ref{eq:pears1}) was first derived computationally by Karl
Pearson in 1924~\cite{Pearson:1924vh}, for the purpose of expressing
sums of binomial coefficients in terms of the more easily computable
beta distribution. See also~\cite[p. 173]{feller_introduction_1968},
\cite[8.17.5]{olver_nist_2010}.

Both expressions for the equalization probability are useful. The
first, involving the beta function, is easily computed numerically.
The second can be used to establish central limit results, using the
deMoivre-Laplace theorem~\cite{feller_introduction_1968}, or large
deviations results, using Cram\'er's
theorem~\cite{kallenberg_foundations_2010}. This second expression can
also be written as an explicit sum of binomial coefficients,
\begin{displaymath}
  \frac{1}{2^{b+w-2}}\sum_{j=0}^{w-1}\binom{b+w-1}{j} = 
1-\frac{1}{2^{b+w-1}}\sum_{j=w}^{b-1}\binom{b+w-1}{j},
\end{displaymath}
and these forms are useful when either $w$ or $b-w$ are small,
respectively.

\section*{Acknowledgments}
\label{sec:ack}

I thank Eli Ben-Naim for helpful comments, and acknowledge support
from the Department of Energy under contract DE-AC52-06NA25396.

\begin{bibdiv}
\begin{biblist}

\bib{aldous_ecole_1985}{book}{
      author={Aldous, David~J},
       title={{Exchangeability and Related Topics}},
      series={Ecole d'Ete de Probabilites de Saint-Flour XIII, 1983 (Lecture
  Notes in Mathematics 1117)},
   publisher={{Springer}},
        date={1985},
}

\bib{Antal:2010uh}{article}{
      author={Antal, T},
      author={Ben-Naim, E},
      author={Krapivsky, PL},
       title={{First-passage properties of the P{\'o}lya urn process}},
        date={2010},
     journal={Journal of Statistical Mechanics: Theory and Experiment},
      volume={2010},
       pages={P07009},
}

\bib{Eggenberger:1923kv}{article}{
      author={Eggenberger, F},
      author={P{\'o}lya, G},
       title={{{\"U}ber die Statistik verketteter Vorg{\"a}nge}},
        date={1923},
     journal={ZAMM - Zeitschrift f{\"u}r Angewandte Mathematik und Mechanik},
      volume={3},
      number={4},
       pages={279\ndash 289},
}

\bib{feller_introduction_1968}{book}{
      author={Feller, William},
       title={{An Introduction to Probability Theory and Its Applications,
  {V}ol. 1}},
     edition={3},
   publisher={{Wiley}},
        date={1968},
}

\bib{feller_introduction_1971}{book}{
      author={Feller, William},
       title={{An Introduction to Probability Theory and Its Applications, Vol.
  2}},
     edition={2},
   publisher={{Wiley}},
        date={1971},
}

\bib{finetti_theory_1975}{book}{
      author={Finetti, Bruno~De},
       title={Theory of probability. a critical introductory treatment.
  {V}olume 2.},
     edition={1},
   publisher={John Wiley \& Sons Ltd},
        date={1975},
        ISBN={0471201421},
}

\bib{Freedman:1965tv}{article}{
      author={Freedman, DA},
       title={{Bernard Friedman's urn}},
        date={1965},
     journal={The Annals of Mathematical Statistics},
      volume={36},
      number={3},
       pages={956\ndash 970},
}

\bib{johnson_urn_1977}{book}{
      author={Johnson, Norman~L.},
      author={Kotz, Samuel},
       title={Urn models and their application: An approach to modern discrete
  probability theory},
   publisher={John Wiley \& Sons Inc},
        date={1977},
        ISBN={0471446300},
}

\bib{kallenberg_foundations_2010}{book}{
      author={Kallenberg, Olav},
       title={{Foundations of Modern Probability}},
     edition={2nd ed.},
   publisher={{Springer}},
        date={2010},
}

\bib{mahmoud_polya_2008}{book}{
      author={Mahmoud, Hosam},
       title={{Polya Urn Models}},
     edition={1},
   publisher={{Chapman and Hall/CRC}},
        date={2008},
}

\bib{olver_nist_2010}{book}{
      author={Olver, Frank W~J},
      author={Lozier, Daniel~W},
      author={Boisvert, Ronald~F},
      author={Clark, Charles~W},
       title={{NIST Handbook of Mathematical Functions}},
   publisher={{Cambridge University Press}},
        date={2010},
}

\bib{Pearson:1924vh}{article}{
      author={Pearson, K},
       title={{Note on the Relationship of the Incomplete B-function to the Sum
  of the first p terms of the Binomial $(a+b)^n$}},
        date={1924jan},
     journal={Biometrika},
       pages={202\ndash 203},
}

\bib{Polya:1930wp}{article}{
      author={P{\'o}lya, G},
       title={{Sur quelques points de la th{\'e}orie des probabilit{\'e}s}},
        date={1930},
     journal={Annales de l'Institut Henri Poincar{\'e}},
      volume={1},
      number={2},
       pages={117\ndash 161},
}

\end{biblist}
\end{bibdiv}

\end{document}